\documentclass{amsart}
\usepackage{header}

\keywords{Theta characteristics, spin structures, Riemann surfaces, Automorphisms}
\subjclass[2020]{14H37, 14H51, 20J06, 20C15}

\title{The Parity of Invariant Characteristics}

\author[Linden Disney-Hogg]{Linden Disney-Hogg \orcidA{}}
\address{
School of Mathematics \\ Imperial College London \\ 
London, U.K.
}
\email{a.disneyhogg@imperial.ac.uk}

\thanks{\textbf{Acknowledgements.} LDH is grateful to Harry Braden for discussions on the proofs in the paper.}

\thanks{\textbf{Statements and Declarations.} The authors have no relevant financial or non-financial interests to disclose.}

\begin{document}

\begin{abstract}
    We demonstrate a method to prove that a theta characteristic on Riemann surface $S$ invariant under the action of $G \leq \mathrm{Aut}(S)$ has even parity by considering the representation theory of the double cover $\tilde{G} = 2 \cdot G$. We apply this to certain Hurwitz curves, and prove a recent conjecture of Broughton \& Disney-Hogg. 
\end{abstract}

\maketitle

\section{Introduction}

A \textit{theta characteristic} on a (smooth, compact, connected) Riemann surface $S$ is a line bundle $L \to S$ such that $L^{\otimes 2}$ is isomorphic to $K_S$ the canonical bundle. The \textit{parity} of a theta characteristic $q(L) := \dim \Gamma(L) \mod 2$($=\dim H^0(S, L) \mod 2$) is an important invariant under holomorphic deformations of $L \to S$ \cite{Atiyah1971}. Given a symplectic basis for the homology $H_1(S, \mathbb{Z})$, one may express theta characteristics as binary vectors $(\bm{u}, \bm{v}) \in \mathbb{Z}^{2g}$ where $g=g(S)$ is the genus of the surface, whereby the parity is simply $q = \bm{u} \cdot \bm{v}$ \cite{Johnson1980}. Historically, odd and even theta characteristics arose from the consideration of odd and even theta functions on the Jacobian of $S$; the modern study was given new impetus when Atiyah showed that theta characteristics are equivalent to spin structures on a Riemann surface \cite{Atiyah1971}. For a more complete history of theta characteristics and their modern application, see \cite{Farkas2012}.

The number of theta characteristics on a curve of genus $g$ is $2^{2g}$, $2^{g-1}(2^g-1)$ odd and $2^{g-1}(2^g+1)$ even, which are permuted by the induced action of $G \leq \Aut(S)$. Throughout we shall assume $g \geq 2$, so that $\Aut(S)$ is finite. Atiyah initially studied characteristics invariant under $f \in G$ \cite{Atiyah1971}, and recently a more comprehensive survey was made in \cite{Braden2025}, which amongst other results classified which Hurwitz curves with simple automorphism group had a unique invariant characteristic, though the parity of these invariant characteristics was not determined. Knowing the parity of invariant characteristics on a curve has been applied to, for example, determine $K$-stability of associated Fano threefolds \cite{Cheltsov2024}.

The reason that \cite{Braden2025} was able to prove existence of unique invariant characteristics on Hurwitz curves but not determine their parity is because of a disconnect between the two main current methods for studying theta characteristics. 
\begin{enumerate}
    \item Determine a particular symplectic homology basis and associated homology representation $G \to \mathrm{Sp}(2g, \mathbb{Z})$. The benefit of this method is that the full orbit structure of the theta characteristics and their parity may be computed \cite{Kallel2010}, but this is limited to lower genus because of current computational techniques. 
    \item Study the exact sequence of (abelian) groups associated to $S$
    \begin{equation}\label{eq: general exact sequence}
    0 \to \Hom(G, \mathbb{C}^\times) \to \Pic(G; S) \to \Pic(S)^G \to H^2(G, \mathbb{C}^\times) \to 0,
    \end{equation}
    where $\Pic(G; S)$ is the group of $G$-linearised line bundles, $\Pic(S)^G$ the group of $G$-invariant line bundles, and $H^2(G, \mathbb{C}^\times)$ is the Schur multiplier group \cite{Dolgachev1997a}. This method works well for curves where $S/G \cong \mathbb{P}^1$ and $G$ has well-known group properties, but apriori cannot determine the parity of invariant characteristics. 
\end{enumerate}

\begin{remark}
    Recall that a line bundle $L \in \Pic(S)^G$ is said to be \textit{$G$-linearised} if the associated collection of isomorphisms $\pbrace{\phi_h : L \overset{\cong}{\to} L \, | \, h \in G}$ forms a linear representation of $G$ as opposed to just a projective representation (which it must necessarily form for $L$ to be $G$-invariant). 
\end{remark}

New efficient computational tools have increased the range of genera for which approach (1) is useful \cite{Broughton2026}. In that paper the family of modular curves $X(p)$ ($p$ an odd prime, $p \geq 7$) was considered, for which approach (2) may be used to prove the existence of a unique invariant characteristic. Given that the parity of this characteristic is even for $p=7, 11, 13, 17$ (determined through approach (1)), the following conjecture was made. 

\begin{conjecture}[\cite{Broughton2026}]\label{conjecture}
    Let $p$ be an odd prime, $p\geq 7$. The unique invariant theta characteristic on the modular curve $X(p)$ is always even. 
\end{conjecture}

We shall prove this conjecture by demonstrating a method which uses the representation theory of the double cover $\tilde{G} = 2 \cdot G$. The method will be sufficiently general to be able to determine the parity of the invariant theta characteristics on all Hurwitz curves with simple automorphism group. In particular we will prove the following. 

\begin{theorem}\label{thm: parity of UIC on simple Hurwitz}
    If $S$ is a Hurwitz curve with simple automorphism group and unique invariant characteristic $L$, then $L$ is even.
\end{theorem}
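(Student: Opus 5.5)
The strategy is to compute $\dim H^0(S,L)$ as a representation and show that it splits into pieces whose dimensions pair up. Since $L$ is the unique invariant characteristic, $L\otimes L\cong K_S$, and $G$ acts on $S$, the projective action of $G$ on $L$ lifts to a genuine linear action of some central extension. The natural $G$-linearisation of $K_S$ restricts the choice of lifts: one can always choose them so that $\tilde{G}=2\cdot G$ (the double cover, obtained by pulling back along the squaring map) acts linearly on $L$, with $-1\in\tilde G$ acting by $-1$. This gives a \emph{spin} (faithful on the centre) representation of $\tilde G$ on $V=H^0(S,L)$. If $G$ is simple and $L$ is not itself $G$-linearisable, then, using the exact sequence \eqref{eq: general exact sequence} and $\Hom(G,\mathbb{C}^\times)=0$, this extension is the nontrivial one. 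In that case $V$ decomposes into genuinely projective (spin) irreducibles of $\tilde G$. If instead $L$ is $G$-linearised, we work with $G$ directly.

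The key step is to compute the character of $V$ by a holomorphic Lefschetz / Eichler trace formula. For $g\in\tilde G$ with image $\bar g\neq 1$, we have
\[
\mathrm{tr}(g|H^0(L))-\mathrm{tr}(g|H^1(L))=\sum_{P\in\mathrm{Fix}(\bar g)}\frac{\lambda_P(g)}{1-\zeta_P^{-1}},
\]
where $\zeta_P$ is the rotation of $\bar g$ at $P$ and $\lambda_P(g)$ is the eigenvalue on the fibre $L_P$, a square root of the eigenvalue on $K_{S,P}$ chosen consistently with the lift. Serre duality gives $H^1(L)\cong H^0(L)^*$. Since $S\to S/G\cong\mathbb{P}^1$ has signature $(2,3,7)$, only elements of orders $2$, $3$ and $7$ (and their lifts of orders $4$, $6$ or $3$, $14$ or $7$) have fixed points. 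The right-hand side is therefore determined by local data at the three branch points. Combining this with the triviality of the trace formula at the identity (Riemann--Roch gives $\chi(L)=0$), we obtain the virtual character $\chi_V-\overline{\chi_V}$ explicitly.

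We then conclude parity from reality. The character $\chi_V-\overline{\chi_V}$ determines $V$ modulo self-dual pieces. We would decompose $V=W\oplus W^*\oplus U$, where $U$ is self-dual. Quaternionic or real spin irreducibles in $U$ contribute, and the essential claim is that every irreducible spin representation of $\tilde G$ appearing in $U$ with odd multiplicity has even dimension. A cleaner route is Atiyah's: $H^0(L)$ carries the nondegenerate pairing $H^0(L)\otimes H^0(L)\to H^0(K)$, and the parity is $\dim V \bmod 2$. We therefore argue that, for the groups in question, namely $\mathrm{PSL}(2,q)$ Hurwitz groups and the finitely many sporadic-type simple Hurwitz groups from the classification in \cite{Braden2025}, all faithful irreducible representations of $2\cdot G$ have even dimension, or at least those which can occur, with the occurring ones pinned down by the Lefschetz computation. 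For $\mathrm{SL}(2,q)$ this holds because faithful irreducibles have dimensions $q\pm1$ or $(q\pm1)/2$, and the odd ones among the latter are excluded by $\pm$ character values at the order-$4$ lifts. In the $G$-linearised case, we instead use the fact that $V\cong V^*$ to pair non-self-dual summands, and check that self-dual odd-dimensional summands are excluded.

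The main obstacle is the last step. Showing that $V$ has no odd-dimensional constituent with odd multiplicity needs the precise local eigenvalues $\lambda_P$, including the sign choice of square roots consistent across the $\tilde G$-action, and then a case check against character tables. I would first try to prove that the trace of the order-$4$ lift of an involution on $V$ is forced to be $0$ in a way that implies evenness of all contributing constituents. Failing that, I would verify the remaining exceptional simple Hurwitz groups case by case.
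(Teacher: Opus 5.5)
There is a genuine gap, in two places. First, you never show that the central element $z$ acts by $-1$, i.e.\ that $L$ is not $G$-linearisable, and your plan for the linearised branch cannot succeed. Serre duality pairs $H^0(L)$ with $H^1(L)$, not with itself, so $V\cong V^*$ is unjustified. Even granting it, nothing excludes odd-dimensional self-dual constituents, the trivial representation for instance. Indeed, in the paper's example of $A_5$ acting with signature $(0;3,3,5)$ in genus $5$, $F$ is even, $L$ is $G$-linearisable, and $L$ is odd. So this branch must be ruled out, and that is where the signature is used. Here $F=42\bigl(1-\tfrac12-\tfrac13-\tfrac17\bigr)=1$, so $K_S$ maps to a generator of the free part of $\mathrm{Pic}(G;S)$ and is not a square there. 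If $L$ carried a $G$-linearisation, then $L^{\otimes 2}$ would equal $K_S$ in $\mathrm{Pic}(G;S)$, since two linearisations of $K_S$ differ by a character of the perfect group $G$. That is a contradiction, hence $\rho(z)=-I$. You already had the local form of this: an involution $h$ fixing $P$ acts on $K_{S,P}$ by $-1$, so any lift of $h$ acts on $L_P$ by $\pm i$ and has order $4$. Your parenthetical ``lifts of orders $4$'' therefore already shows that your double cover does not split over $G$ and that $z=\tilde h^2$ acts by $-1$, but you kept a separate linearised case instead of drawing this conclusion.

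Second, once $\rho(z)=-I$, you are missing the one-line argument that finishes the proof. Since $\tilde G$ is perfect, $\det\circ\rho$ is trivial, so $(-1)^{\dim V}=\det\rho(z)=1$. This is also why every irreducible of $2\cdot G$ on which $z$ acts by $-1$ is even-dimensional. For $\mathrm{SL}(2,q)$, the odd-dimensional representations of dimension $(q\pm1)/2$ are simply not faithful, so there is nothing to exclude via order-$4$ character values. Without this argument, your route through the holomorphic Lefschetz formula and character tables does not close, for two reasons. The formula only yields $\chi_V-\overline{\chi_V}$, which vanishes at the identity and is blind to the self-dual constituents where the parity lives. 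And a case-by-case check is not finite, since simple Hurwitz groups are not just $\mathrm{PSL}(2,q)$ plus finitely many exceptions ($A_n$ is Hurwitz for all large $n$, for example). The paper's proof consists of exactly the two steps you are missing: $G$ perfect with $F$ odd gives $\rho(z)=-I$, and perfectness of $\tilde G$ then forces $\dim V$ to be even.
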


\section{Simplified exact sequence}\label{sec: simplified sequence}

\subsection{Group simplifications}

In order to make progress we will consider particular curves for which (\ref{eq: general exact sequence}) simplifies.

We shall assume that $G$ is a perfect group, i.e. $\psquare{G, G} = G$. The consequences of this are three-fold.
\begin{enumerate}
    \item $\Hom(G, \mathbb{C}^\times)=0$ if $G$ is perfect \cite{Huppert2025}. 
    \item If $G$ is perfect, $G$ has a universal central extension $\tilde{G}$ which is itself perfect, defined by the short exact sequence of groups \cite[\S6.9]{Weibel1995} 
    \[
    0 \to H^2(G, \mathbb{C}^\times) \to \tilde{G} \to G \to 0,
    \]
    universal in the sense that for any other central extension $0 \to Z \to Y \to G \to 0$ there exists a unique homomorphism $f:\tilde{G} \to Y$ in the diagram
    \begin{center}
    \begin{tikzcd}
        0\arrow[r] & H^2(G, \mathbb{C}^\times) \arrow[r]\arrow[d] & \tilde{G} \arrow[r]\arrow[d, "\exists ! f"] & G \arrow[r]\arrow[d, "="] & 0 \\ 0\arrow[r] & Z \arrow[r] & Y \arrow[r, "\pi"] & G \arrow[r] & 0.
    \end{tikzcd}
    \end{center}
    \item $G$ is perfect implies that any $L \in \Pic(S)^G$ is $\tilde{G}$-linearised, and the linearisation is unique \cite[Corollary 1.2]{Dolgachev1997a}. 
\end{enumerate}

Assuming that $S$ admits a $G$-invariant theta characteristic $L$, which we shall do for the remainder of the paper, this means that we now have a linear representation $\rho : \tilde{G} \to \mathrm{GL}(\Gamma(L))$. We shall be aiming to show that (in appropriate situations) the dimension of this representation is even.

We furthermore assume that $H^2(G, \mathbb{C}^\times) = C_2$, whereby $\tilde{G} = 2 \cdot G$, the double cover (using the notation of the Atlas). In such a situation $\tilde{G}$ contains a unique non-identity element which generates the centre, which we shall denote $z$. The key lemma we require is the following. 

\begin{lemma}\label{lemma: dim even}
    Let $\tilde{G}$ be a perfect group with (central) involution $z$ and $\rho$ a representation for which $\rho(z)=-I_n$. Then $\dim \rho$ is even. 
\end{lemma}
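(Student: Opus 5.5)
The approach is to use determinants. Since $\tilde{G}$ is perfect, every one-dimensional representation of $\tilde{G}$ is trivial, because $\Hom(\tilde{G}, \mathbb{C}^\times)$ vanishes for perfect groups (consequence (1) above, applied to $\tilde{G}$ rather than $G$). The plan is to apply this to the one-dimensional representation $\det \circ \rho : \tilde{G} \to \mathbb{C}^\times$.

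The key steps, in order, are as follows.

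First, I observe that $\det \circ \rho$ is a group homomorphism $\tilde{G} \to \mathbb{C}^\times$. Because $\mathbb{C}^\times$ is abelian, it factors through the abelianisation $\tilde{G}/[\tilde{G},\tilde{G}]$. That quotient is trivial since $\tilde{G}$ is perfect, so $\det \rho(h) = 1$ for all $h \in \tilde{G}$.

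Second, I evaluate at the central involution: $\det \rho(z) = \det(-I_n) = (-1)^n$. Combining this with the first step gives $(-1)^n = 1$, so $n = \dim \rho$ is even.

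There is essentially no obstacle here. The only point to be careful about is that perfectness is needed for $\tilde{G}$ itself, not merely for $G$. This is exactly the hypothesis of the lemma, and in the setting of the paper it is guaranteed by consequence (2), which says the universal central extension of a perfect group is perfect. One could alternatively argue by decomposing $\rho$ into irreducibles and using Schur's lemma, but the determinant argument avoids any need for complete reducibility or characteristic-zero arguments beyond the definition of $\det$.
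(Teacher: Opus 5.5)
Your proposal is correct and is essentially the paper's own proof. The paper likewise notes that $\det \circ \rho$ is a one-dimensional representation factoring through the trivial abelianisation of $\tilde{G}$, so $(-1)^n = \det \rho(z) = 1$ and hence $n$ is even.
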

\begin{proof}
    Writing $n=\dim \rho$, we have that $\det \rho(z) = (-1)^n$, but $\det \circ \rho$ is itself a 1-dimensional representation of $\tilde{G}$ and so must be a representation of the abelianisation of $\tilde{G}$ which is trivial. As such $\det \rho(z) =1 \Rightarrow n$ is even.
\end{proof}

As a result of Lemma \ref{lemma: dim even}, we see that if we can ensure that $\rho(z)=-I_n$, then $L$ has even parity. We shall see in the next subsection one method to achieve this. 

\subsection{Action simplifications}

So far we have only constrained the possible groups $G$, and not the way they act on $S$. To state the following result we shall need a brief definition. 

\begin{definition}
    Denote with $Q_1, \dots, Q_r$, the branch points of the quotient map $\pi : S \to S/G$, and for any $P_i \in \pi^{-1}(Q_i)$ let $c_i \in \mathbb{Z}_{>1}$ be the order of the corresponding isotropy group. Moreover denote $g_0 = g(S/G)$. We say that the tuple $(g_0; c_1, \dots, c_r)$ is the \textit{signature} of the action of $G$ on $S$.
\end{definition}

\begin{prop}[\cite{Dolgachev1997a}, \cite{Braden2025}]\label{prop: free part}
    Suppose that $G$ acts on $S$ with signature $(0; c_1, \dots, c_r)$. Then $\Pic(G; S)$ has free part $\mathbb{Z}$, and the generator of this has underlying $G$-invariant line bundle isomorphism class $\Gamma$ determined by $K_S = F \Gamma$ where $F \in \mathbb{Z}$ is given by 
    \[
     F= \lcm(c_1, \dots, c_r) \pround{r-2-\sum_{i=1}^r \frac{1}{c_i}}.
    \] 
\end{prop}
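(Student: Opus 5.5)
The plan is to follow the standard description of equivariant Picard groups for a quotient $\pi : S \to S/G \cong \mathbb{P}^1$, as in \cite{Dolgachev1997a} and \cite{Braden2025}. Every $G$-linearised line bundle on $S$ is built from two pieces: the pullback $\pi^*\mathcal{O}_{\mathbb{P}^1}(1)$, and the bundles $\mathcal{O}_S(R_i)$, where $R_i = \pi^{-1}(Q_i)_{\mathrm{red}}$ is the reduced fibre over the branch point $Q_i$. Each of these carries a canonical $G$-linearisation. The relations come from the fibres: $c_i R_i = \pi^*Q_i \sim \pi^*(\text{pt})$ as $G$-linearised bundles, and all the $\pi^*Q_i$ coincide with $\pi^*\mathcal{O}(1)$. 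Hence
\[
\Pic(G;S) \cong \pround{\mathbb{Z} h \oplus \bigoplus_{i=1}^r \mathbb{Z} e_i}\Big/ \pround{c_i e_i - h,\ i=1,\dots,r},
\]
with $h \mapsto \pi^*\mathcal{O}(1)$ and $e_i \mapsto \mathcal{O}_S(R_i)$. First I will check that this presentation is complete. The key point is that a $G$-linearised bundle has a $G$-invariant meromorphic section. Its divisor is $G$-invariant, so it is a combination of pulled-back points and of the $R_i$, and a change of invariant section changes the divisor by $\pi^*$ of a principal divisor on $\mathbb{P}^1$.

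Next I will compute the free rank and a generator. Send $h \mapsto N := \lcm(c_1,\dots,c_r)$ and $e_i \mapsto N/c_i$. This is a homomorphism to $\mathbb{Z}$ that kills the relations, and it is just $|G|^{-1}N$ times the degree. The kernel is torsion, because $N e_i - (N/c_i) h = (N/c_i)(c_i e_i - h)$ is a relation and the relation matrix has full rank $r$. So the free part has rank $1$. To see that its image is all of $\mathbb{Z}$, note that $\gcd(N, N/c_1, \dots, N/c_r) = 1$: for each prime $p$, choose $i$ with $v_p(c_i) = v_p(N)$, and then $p \nmid N/c_i$. A generator $\Gamma$ can therefore be taken as any lift of $1$. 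Its underlying bundle has degree $|G|/N$.

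Then I will locate $K_S$. By Riemann--Hurwitz, $K_S = \pi^*K_{\mathbb{P}^1} + \sum_i (c_i-1)R_i$, and this holds as $G$-linearised bundles, since $d\pi$ gives an invariant section. This is the element $-2h + \sum_i (c_i-1)e_i$. Its image in $\mathbb{Z}$ is
\[
-2N + \sum_i (c_i - 1)\frac{N}{c_i} = N\pround{r - 2 - \sum_i \frac{1}{c_i}} = F.
\]
So $K_S = F\Gamma$ modulo torsion in $\Pic(G;S)$. To obtain the stated equality of underlying isomorphism classes, I will adjust the choice of lift $\Gamma$ and use that the torsion part is generated by differences $N_i$-multiples. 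Concretely, the torsion maps to $\Hom(G,\mathbb{C}^\times)$-twists together with classes whose underlying bundles are degree-zero combinations of the $R_i$ and $h$. The cleanest route is to phrase the claim as being about the image of the free generator. This is how \cite{Braden2025} state it, and I will cite the identification for the torsion subtleties.

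I expect the main obstacle to be the completeness of the presentation: showing that every linearised bundle has an invariant rational section, and that the only relations are the stated ones. This uses the fact that $\mathbb{C}(S)^G = \mathbb{C}(\mathbb{P}^1)$, together with Hilbert 90 for the $G$-action on $\mathbb{C}(S)^\times$ to produce the invariant section. I would try the Hilbert 90 argument first.
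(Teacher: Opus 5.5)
Everything through your Riemann--Hurwitz computation is correct. It is the standard argument behind the results the paper cites; the paper itself gives no proof. Hilbert 90 gives invariant meromorphic sections, so $\Pic(G;S)\cong\langle h,e_1,\dots,e_r\mid c_ie_i=h\rangle$. The map $\delta=(N/|G|)\deg$ is a surjection onto $\mathbb{Z}$ with finite kernel $T$. And $\delta(K_S)=F$ for $K_S=-2h+\sum_i(c_i-1)e_i$.

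\textbf{The gap is your final paragraph.} Write $K_S=F\Gamma_0+t$ with $\delta(\Gamma_0)=1$ and $t\in T$. Adjusting the lift means finding $s\in T$ with $t-Fs$ in the image of $\Hom(G,\mathbb{C}^\times)$, which is the kernel of $T\to\Pic(S)$. You give no argument for this; the remark about the torsion being generated by differences of multiples does not supply one. In fact, in general no such $s$ exists.

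Let $G=C_2^2$ act on the Fermat quartic $x^4+y^4+z^4=0$ by sign changes of $x$ and $y$. The quotient map is $(x:y:z)\mapsto(x^2:y^2:z^2)$ onto a conic, and the signature is $(0;2,2,2,2,2,2)$, so $N=2$ and $F=2$. The elements $e_i-e_1$ generate $T$ and are killed by $2$. Hence every $\Gamma$ with $\delta(\Gamma)=1$ satisfies $2\Gamma=2e_1=h$. But $h$ is the class of a fibre $\{(\pm\alpha:\pm\beta:1)\}$, which consists of four non-collinear points, while every divisor in the linear system of $K_S=\mathcal{O}_S(1)$ is a line section. So $K_S\neq F\Gamma$ for every lift $\Gamma$ of the generator of the free part, even on underlying bundles.

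The exact equality therefore cannot be obtained in this generality. What your argument does prove is $K_S\equiv F\Gamma$ modulo torsion, i.e.\ $\delta(K_S)=F$. That is all the paper uses afterwards. If $\Hom(G,\mathbb{C}^\times)=0$ and $2x$ has underlying bundle $K_S$, then $2x=K_S$ in $\Pic(G;S)$, and applying $\delta$ forces $F$ to be even.

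Your setup also gives the exact equality whenever $\gcd(F,|T|)=1$. Multiplication by $F$ is then bijective on $T$, so you can choose $s$ with $Fs=t$ and take $\Gamma=\Gamma_0+s$. Since $|T|$ is the gcd of the maximal minors of the relation matrix, namely $\gcd_{j}\bigl(\prod_{i\neq j}c_i\bigr)$, this covers every signature with pairwise coprime $c_i$. Examples are $(0;2,3,7)$ and $(0;2,3,p)$, where $\Pic(G;S)\cong\mathbb{Z}$. You should state one of these restricted versions explicitly instead of deferring the torsion to a citation.
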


In order to use Proposition \ref{prop: free part} we shall assume that $g_0=0$. The condition we shall want to ensure (for reasons made clear by Lemma \ref{lemma: dim even}) is that $\rho(z) = -I_n$. Now as $L \in \Pic(S)^G$ there is a projective representation $\sigma : G \to \mathrm{PGL}(\Gamma(L))$. This lifts to $\sigma^\prime$ a linear representation of $Y$, a central extension of $G$ by $\mathbb{C}^\times$ defined via the diagram \cite{Weibel1995}
\begin{center}
\begin{tikzcd}
    0\arrow[r] & \mathbb{C}^\times \arrow[r]\arrow[d, "="] & Y \arrow[r]\arrow[d, "\sigma^\prime"] & G \arrow[r]\arrow[d, "\sigma"] & 0 \\ 0\arrow[r] & \mathbb{C}^\times \arrow[r] & \mathrm{GL}(\Gamma(L)) \arrow[r, "\pi"] & \mathrm{PGL}(\Gamma(L)) \arrow[r] & 0.
\end{tikzcd}
\end{center}
Coupling this to the diagram that exists by the universality of $\tilde{G}$ we have two maps $\tilde{G} \to \mathrm{GL}(\Gamma(L))$, namely $\rho$ and $\sigma^\prime \circ f$. Using the uniqueness of the $\tilde{G}$-linearisation of $L$ (any two such linearisation differ only by an element of $\Hom(\tilde{G}, \mathbb{C}^\times)=0$ \cite{Dolgachev1997a}) we see that $\rho = \sigma^\prime \circ f$. As $z \in \tilde{G}$ is in the centre it maps to the subgroup $\mathbb{C}^\times \leq \mathrm{GL}(\Gamma(L))$ consisting of diagonal matrices, and so the linear representation of $z$ is diagonal. As such, given $z$ is an involution, to have $\rho(z)=-I_n$ it is sufficient to show $\rho(z) \neq I_n$. 

One way to achieve this is to make sure $\Gamma(L)$ does not admit a linear representation of $G$, which would otherwise be induced by $\rho$ when $\rho(z)=I_n$. In order to have that $L$ is not $G$-linearisable it is sufficient to have that $K_S \in \Pic(G; S)$ is not a square, and so sufficient to have $F(c_i) = 1 \mod 2$. Writing each $c_i = 2^{a_i} b_i$ for $b_i \in \mathbb{Z}_{\geq 1}$ odd, and letting $A = \max(a_i)$, one can check that
\[
F = \left \lbrace \begin{array}{cc}
    0, & A=0, \\
    \abs{\pbrace{i \, | a_i=A}}, & A \neq 0,
\end{array} \mod 2 \right. 
\]

The culmination of this discussion is that if we have surface $S$ of genus $g\geq2$ with a theta characteristic invariant under $G \leq \Aut(S)$ such that 
\begin{enumerate}
    \item $G$ is perfect,
    \item $\abs{H^2(G, \mathbb{C}^\times)}=2$, 
    \item $g_0=0$, and 
    \item\label{item: parity} $A \neq 0$ with $\abs{\pbrace{i \, | a_i=A}}$ odd,
\end{enumerate}
then the invariant characteristic is even.

\section{Application to known curves}

\subsection{\secmath{A_5}}

The smallest perfect group is $G=A_5$, which also has $H^2(G, \mathbb{C}^\times) = C_2$ corresponding to the double cover $2 \cdot A_5 \to A_5$. We consider two actions of this group on curves of low genus. 
\begin{enumerate}
    \item $A_5$ acts with signature $(0; 2, 5, 5)$ on a surface $S$ of genus 4: this is Bring's curve, which has full automorphism $S_5$ \cite{Braden2023a}. Bring's curve is known to have a unique even characteristic invariant under the action of $A_5$ (indeed also under the full $S_5$), and the method laid out in \S\ref{sec: simplified sequence} again verifies this fact.
    \item $A_5$ acts with signature $(0; 3, 3, 5)$ on a hyperelliptic surface of genus 5. This curve admits a unique invariant characteristic which is odd \cite{Braden2025}, demonstrating that the condition (\ref{item: parity}) on the parity of $F$ as determined by $A$ is not vacuous. 
\end{enumerate}

\subsection{\secmath{X(p)}}

In the case considered in Conjecture \ref{conjecture}, $S=X(p)$ has full automorphism group $G=\mathrm{PSL}(2, p)$, which is simple and so necessarily perfect, acting with signature $(0; 2, 3, p)$ on $S$. Moreover, $H^2(G, \mathbb{C}^\times)=C_2$ corresponding to the double cover $\mathrm{SL}(2, p) \to \mathrm{PSL}(2, p)$. As such, all the necessary conditions are satisfied, and the conjecture of \cite{Broughton2026} is proven.

\subsection{Hurwitz curves with simple automorphism group}

Hurwitz curves are those with maximal automorphism group for their genus, namely having $\abs{\Aut(S)} = 84(g-1)$, which enforces that the signature of the action is $(0; 2, 3, 7)$. Hurwitz curves have perfect automorphism group \cite{Conder1990}, and those with simple automorphism group have been fully classified: \cite{Braden2025} identified which of these have a unique invariant characteristics, and in all cases $H^2(G, \mathbb{C}^\times) = C_2$. As such, we have proven Theorem \ref{thm: parity of UIC on simple Hurwitz}. 


\bibliography{library}

@InCollection{Dolgachev1997a,
  author    = {Dolgachev, I. V.},
  booktitle = {Recent progress in algebra. Proceedings of an international conference, KAIST, Taejon, South Korea, August 11--15, 1997},
  publisher = {American Mathematical Society},
  title     = {Invariant stable bundles over modular curves ${X}(p)$},
  year      = {1999},
  isbn      = {0-8218-0972-5},
  pages     = {65--99},
}

@Article{Braden2025,
  author    = {Braden, H. W. and Disney-Hogg, L.},
  journal   = {Experimental Mathematics},
  title     = {Orbits of Theta Characteristics},
  year      = {2025},
  issn      = {1058-6458},
  pages     = {1--50},
  comment   = {doi: 10.1080/10586458.2025.2481271},
  doi       = {10.1080/10586458.2025.2481271},
  publisher = {Taylor & Francis},
  url       = {https://doi.org/10.1080/10586458.2025.2481271},
}

@Misc{Broughton2026,
  author        = {Broughton, S. Allen and Disney-Hogg, Linden},
  howpublished  = {arXiv:2606.13922},
  title         = {Explicit homology representation for finite groups acting on {R}iemann surfaces},
  year          = {2026},
  archiveprefix = {arXiv},
  eprint        = {2606.13922},
  primaryclass  = {math.AG},
}

@Article{Atiyah1971,
  author    = {Atiyah, M. F.},
  journal   = {Annales scientifiques de l'\'Ecole Normale Sup\'erieure, Serie 4},
  title     = {Riemann surfaces and spin structures},
  year      = {1971},
  number    = {1},
  pages     = {47--62},
  volume    = {4},
  doi       = {10.24033/asens.1205},
  publisher = {Elsevier},
  url       = {www.numdam.org/item/ASENS_1971_4_4_1_47_0/},
}

@Misc{Cheltsov2024,
  author        = {I. Cheltsov and O. Li and S. Ma'u and A. Pinardin},
  howpublished  = {arXiv:2404.07803},
  title         = {K-stability and space sextic curves of genus three},
  year          = {2024},
  archiveprefix = {arXiv},
  eprint        = {2404.07803},
  primaryclass  = {math.AG},
}

@Article{Johnson1980,
  author  = {Johnson, D.},
  journal = {Journal of the London Mathematical Society},
  title   = {Spin Structures and Quadratic forms on Surfaces},
  year    = {1980},
  issn    = {0024-6107},
  number  = {2},
  pages   = {365--373},
  volume  = {s2-22},
  doi     = {10.1112/jlms/s2-22.2.365},
  url     = {https://doi.org/10.1112/jlms/s2-22.2.365},
}

@Article{Kallel2010,
  author  = {S. Kallel and D. Sjerve},
  journal = {Annales de la Facult{\'e} des Sciences de Toulouse},
  title   = {Invariant Spin Structures on {R}iemann Surfaces},
  year    = {2010},
  pages   = {457-477},
  volume  = {19},
}

@Article{Conder1990,
  author  = {Conder, M.},
  journal = {Bulletin (New Series) of the American Mathematical Society},
  title   = {Hurwitz groups: A brief survey},
  year    = {1990},
  number  = {2},
  pages   = {359--370},
  volume  = {23},
  url     = {https://doi.org/},
}

@Article{Braden2023a,
  author   = {Braden, H. W. and Disney-Hogg, L.},
  journal  = {European Journal of Mathematics},
  title    = {Bring{'}s curve: old and new},
  year     = {2023},
  issn     = {2199-6768},
  number   = {1},
  pages    = {3},
  volume   = {10},
  doi      = {10.1007/s40879-023-00706-0},
  refid    = {Braden2023},
  url      = {https://doi.org/10.1007/s40879-023-00706-0},
}

@Book{Weibel1995,
  author    = {Weibel, C. A.},
  publisher = {Cambridge University Press},
  title     = {An Introduction to Homological Algebra},
  year      = {1995},
  isbn      = {9780521559874},
  series    = {Cambridge Studies in Advanced Mathematics},
  volume    = {38},
  lccn      = {93015649},
  url       = {https://discovered.ed.ac.uk/permalink/44UOE_INST/7g3mt6/alma994944593502466},
}

@Book{Huppert2025,
  author    = {Huppert, B.},
  publisher = {Springer},
  title     = {Finite groups {I}. {Translated} from the {German} by {Christopher} {A}. {Schroeder}},
  year      = {2025},
  series    = {Grundlehren Math. Wiss.},
  volume    = {364},
  fseries   = {Grundlehren der Mathematischen Wissenschaften},
  issn      = {0072-7830},
}

@Article{Farkas2012,
  author   = {Farkas, G.},
  journal  = {Milan Journal of Mathematics},
  title    = {Theta Characteristics and Their Moduli},
  year     = {2012},
  issn     = {1424-9294},
  number   = {1},
  pages    = {1--24},
  volume   = {80},
  doi      = {10.1007/s00032-012-0178-7},
  refid    = {Farkas2012},
  url      = {https://doi.org/10.1007/s00032-012-0178-7},
}
\bibliographystyle{amsalpha}

\end{document}